\DeclareRobustCommand{\stirling}{\genfrac\{\}{0pt}{}}
\newtheorem{thm}[lemma]{Theorem}
\newtheorem{cor}[lemma]{Corollary}
\newtheorem{prop}[lemma]{Proposition}
\numberwithin{equation}{section}
\begin{document}

\ArticleName{Base-$b$ analogues of classic combinatorial objects}

\Author{Tanay V. Wakhare $^\ast$, Christophe Vignat $^\dag$}
\AuthorNameForHeading{T.~V.~Wakhare, C.~Vignat}

\Address{$^\ast$~University of Maryland, College Park, MD 20742, USA}
\EmailD{twakhare@gmail.com}

\Address{$^\dag$~Tulane University, New Orleans, LA 70118, USA}
\EmailD{cvignat@tulane.edu}


\Abstract{We study the properties of the base-$b$ binomial coefficient defined by Jiu and the second author, introduced in the context of a digital binomial theorem. After introducing a general summation formula, we derive base-$b$ analogues of the Stirling numbers of the second kind, the Fibonacci numbers and the classical exponential function.}

\Keywords{binomial coefficient; Stirling numbers; bases; binomial theorems; Fibonacci numbers}
\Classification{05A10;11B65;11B73;11B39}
\section{Introduction}
\label{Introduction}

Recently, Jiu and the second author have conducted work \cite{Vignat1} concerning the base-$b$ binomial coefficient. We let $n=\sum_{i=0}^{N_n-1}n_i b^i$ and $k=\sum_{i=0}^{N_k-1}k_i b^i$ be the base-$b$ expansions of $n$ and $k$ respectively. Then we define $N:=\max\{N_n, N_k\}$, the base-$b$ binomial coefficient is given as the product
\begin{equation}\label{1.1}
\binom{n}{k}_b := \prod_{i=0}^{N-1}\binom{n_i}{k_i},
\end{equation}
so that $\binom{n}{k}_b=0$ if $k_i>n_i$ for some $i.$ 

This also motivates the definition of a base $b$ factorial, which we define as 
\[(n!)_b:=\prod_{i=0}^{N_n-1}n_i!.\]

The binomial theorem is a classic result stating  that $(X+Y)^{n} = \sum_{k=0}^{n} \binom{n}{k} X^{k} Y^{n-k}$. Nguyen subsequently generalized this to the digital binomial theorem in \cite[(26)]{Nguyen1}, which states that
\begin{equation}\label{1.2}
(X+Y)^{s_2(n)} = \sum_{0\leq k \leq_2 n} X^{s_2(k)} Y^{s_2(n-k)},
\end{equation}
where $s_2(n)$ denotes the sum of the digits of $n$ expressed in base $2$. The condition $k \leq_2 n$ restricts the summation over indices such that each digit of $k$ is less than the corresponding digit of $n$ in base $2$, i.e. $k_i\leq n_i$ for all $0\leq i \leq N-1$. This notation, which has previously been referred to as digital dominance, will be adopted throughout this paper. This is also equivalent to the condition that the addition of $n-k$ and $k$ be carry-free in base $2$, which can be written symmetrically as $s_2(k)+s_2(n-k)=s_2(n)$ so that an equivalent form of \eqref{1.2} reads
\[
(X+Y)^{s_2(n)} = \sum_{s_2(k)+s_2(n-k)=s_2(n)} X^{s_2(k)} Y^{s_2(n-k).}
\]

The discovery of this digital binomial theorem spurred further extensions by Nguyen \cite{Nguyen1} \cite{Nguyen2}, Nguyen and Mansour \cite{Mansour1,Mansour2} and Liu and the second author \cite{Vignat1}, which led to the introduction of the base-$b$ binomial coefficient $\binom{n}{k}_b$ and a generalization of the digital binomial theorem to an arbitrary base $b$ as follows:

\[
(X+Y)^{s_b(n)} = \sum_{k=0}^{n} \binom{n}{k}_b X^{s_b(k)} Y^{s_b(n-k)}
\]
where $s_b\left(n\right)$ is the sum of the digits of $n$ in base $b$ and the base-$b$ binomial coefficient is
given by \eqref{1.1}.

The paper is organized as follows. In Section \ref{CarryfreeSums}, we present a general summation formula, which is used in Section \ref{Stirling} to derive theorems about a base-$b$ analogue of the Stirling numbers of the second kind. This same formula is used to define a base-$b$ analogue of the Fibonacci numbers in Section \ref{Fibonacci}. Finally, in Section \ref{Exponential} we introduce an analogue of the exponential function involving the base-$b$ factorial and derive some of its properties.

\section{Sums over carry-free k}
\label{CarryfreeSums}
We can extend the methods of \cite{Vignat1} to take sums over carry-free $k$, with a weighting by the base-$b$ binomial coefficient. 

\begin{thm}\label{thm1}
Let $S(n):=\sum_{k=0}^{n} f(n,k)$. Then
\begin{equation}\label{2.1}
\prod_{i=0}^{N-1}S\left(n_i\right) = \sum_{0\leq k \leq_b n} \prod_{i=0}^{N-1}f(n_i, k_i).
\end{equation}
\end{thm}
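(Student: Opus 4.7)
The plan is to expand the left-hand side by the distributive law and then repackage the resulting multiple sum as a single sum over integers $k$ with $0\le k\le_b n$. Concretely, the starting point is the tautology
\[
\prod_{i=0}^{N-1}S(n_i) \;=\; \prod_{i=0}^{N-1}\sum_{k_i=0}^{n_i} f(n_i,k_i),
\]
after which the product of sums is expanded into a single $N$-fold sum over all tuples $(k_0,k_1,\dots,k_{N-1})$ with $0\le k_i\le n_i$ for each $i$:
\[
\prod_{i=0}^{N-1}S(n_i) \;=\; \sum_{k_0=0}^{n_0}\sum_{k_1=0}^{n_1}\cdots\sum_{k_{N-1}=0}^{n_{N-1}} \prod_{i=0}^{N-1}f(n_i,k_i).
\]

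The second step is to recognize the index set of this $N$-fold sum as precisely the set of integers $k$ satisfying $0\le k\le_b n$, via the bijection that sends the digit tuple $(k_0,\dots,k_{N-1})$ to the integer $k=\sum_{i=0}^{N-1}k_i b^i$. Since $0\le k_i\le n_i<b$ for every $i$, the $k_i$ really are the base-$b$ digits of $k$ (possibly padded with leading zeros to length $N$), and the condition $k_i\le n_i$ for all $i$ is, by definition, digital dominance $k\le_b n$. Conversely, any integer $k$ with $0\le k\le_b n$ recovers a unique digit tuple of length $N$. Under this correspondence the summand $\prod_{i=0}^{N-1}f(n_i,k_i)$ is left untouched, and one obtains the right-hand side of \eqref{2.1}.

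There is no real obstacle of substance here — the statement is essentially a repackaging of the distributive law combined with the definition of the digital-dominance relation $\le_b$. The only point that deserves care is the bookkeeping around the common length $N=\max\{N_n,N_k\}$: one must note that padding $k$'s base-$b$ expansion with zeros on the high-order side neither changes $k$ nor affects the product (because the factor $f(n_i,0)$ arising for $i\ge N_k$ is exactly the $k_i=0$ term of $S(n_i)$), so the bijection between tuples and integers $k\le_b n$ is indeed well defined and length-consistent.
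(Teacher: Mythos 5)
Your proposal is correct and follows essentially the same route as the paper's own proof: expand the product of sums into an $N$-fold sum by distributivity, then identify the tuples $(k_0,\dots,k_{N-1})$ with the integers $0\le k\le_b n$. The extra care you take with the digit/integer bijection and the padding to common length $N$ is a welcome elaboration of a step the paper leaves implicit, but it is not a different argument.
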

\begin{proof}
We have
\begin{align*}
\prod_{i=0}^{N-1}S\left(n_i\right) &= \sum_{k_0=0}^{n_0}f\left(n_0,k_0\right)\sum_{k_1=0}^{n_1}f\left(n_1,k_1\right) \cdots\sum_{k_{N-1}=0}^{n_{N-1}}f\left(n_{N-1},k_{N-1}\right)\\
&=\sum_{k_0=0}^{n_0}\sum_{k_1=0}^{n_1}\cdots\sum_{k_{N-1}=0}^{n_{N-1}}f\left(n_0,k_0\right)f\left(n_1,k_1\right)\cdots
f\left(n_{N-1},k_{N-1}\right)\\
&=\sum_{0\leq k \leq_b n} \prod_{i=0}^{N-1} f\left(n_i,k_i\right).\end{align*}
\end{proof}

\begin{cor}
Let $S_2(n):=\sum_{k=0}^{n} \binom{n}{k}f(n,k)$. Then
\begin{equation}\label{2.3}
\prod_{i=0}^{N-1}S_2\left(n_i\right) = \sum_{0\leq k \leq_b n} \prod_{i=0}^{N-1}\binom{n_i}{k_i}f(n_i,k_i) = \sum_{k=0}^{n} \binom{n}{k}_b \prod_{i=0}^{N-1}f(n_i,k_i).
\end{equation}

\begin{proof}
Replace $f\left(n,k\right)$ with $\binom{n}{k}f\left(n,k\right)$ in \eqref{2.1}. We can extend the sum over all $0\leq k \leq n$ since $\binom{n}{k}_b=0$ if $k$ is not digitally dominated by $n$.\end{proof}
\end{cor}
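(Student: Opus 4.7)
The plan is to apply Theorem~\ref{thm1} with $f(n,k)$ replaced by $g(n,k):=\binom{n}{k}f(n,k)$. Since $S_2(n)=\sum_{k=0}^{n}g(n,k)$ fits the hypothesis of the theorem verbatim, formula \eqref{2.1} immediately yields
\[
\prod_{i=0}^{N-1}S_2(n_i)=\sum_{0\leq k\leq_b n}\prod_{i=0}^{N-1}\binom{n_i}{k_i}f(n_i,k_i),
\]
which is precisely the first equality of \eqref{2.3}.

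For the second equality, I would use the defining formula \eqref{1.1} to collapse $\prod_{i=0}^{N-1}\binom{n_i}{k_i}$ into the single symbol $\binom{n}{k}_b$, pulled out in front of the remaining product. It then remains to replace the digital-dominance constraint $0\leq k\leq_b n$ by the weaker range $0\leq k\leq n$. This is legitimate because any $k$ with $0\leq k\leq n$ that fails digital dominance has at least one digit index $i$ with $k_i>n_i$, which forces $\binom{n_i}{k_i}=0$ and hence $\binom{n}{k}_b=0$; the newly added terms contribute zero to the sum.

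There is no substantive obstacle here: the argument is a direct substitution into Theorem~\ref{thm1} followed by an observation about the support of $\binom{n}{k}_b$. The only subtle bookkeeping point is that the residual factor $\prod_{i=0}^{N-1}f(n_i,k_i)$ need not vanish for $k$ outside the digitally dominated set, so the extension of the summation range must be justified by the $\binom{n}{k}_b$ factor alone, as sketched above.
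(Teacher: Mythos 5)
Your proposal is correct and follows exactly the paper's own argument: substitute $\binom{n}{k}f(n,k)$ into Theorem~\ref{thm1}, collapse the digit product into $\binom{n}{k}_b$ via \eqref{1.1}, and extend the summation range using the vanishing of $\binom{n}{k}_b$ off the digitally dominated set. Your additional remark that the extension must be justified by the $\binom{n}{k}_b$ factor alone (since the residual product need not vanish) is a useful clarification but does not change the route.
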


While the algebraic proof of this identity is very simple, it greatly simplifies and supersedes the proof of many previously discovered identities relating to sums over digitally dominated $k$, and admits much more sweeping generalizations. The previous work done on sums over carry-free $k$ has centered on proving this identity for special values of $f(n,k
)$, often using the properties of infinite matrices. We now give the choices of $f\left(n,k
\right)$ that yield the central theorems in previous papers \cite{Mansour1} \cite{Mansour2} \cite{Nguyen1} \cite{Nguyen2} \cite{Vignat1}.
\begin{itemize}
\item
By taking $f(n,k
) = X^k Y^{n-k}$ in \eqref{2.3} with constant $X$ and $Y$, and noting that $S_2(n
) = \sum_{k=0}^{n}\binom{n}{k} X^k Y^{n-k} = (X+Y)^{n}$, we recover the base-$b$ binomial theorem \cite[(Theorem 2)]{Vignat1}. Taking the $b=2$ case reduces to the digital binomial theorem \cite[(26)]{Nguyen1}. Namely,
\begin{equation}\label{2.4}
\left(X+Y\right)^{s_b(n)} = \sum_{k=0}^{n}\binom{n}{k}_b X^{s_b(k)}Y^{s_b(n-k)}.
\end{equation}
\item
We take $f(n,k
) = \frac{\left(X\right)_k}{k!}\frac{\left(Y\right)_{n-k}}{\left(n-k\right)!}$ with constant $X$ and $Y$ in \eqref{2.1}, where $(x)_k := \frac{\Gamma(x+k)}{\Gamma(x)}$ is the Pochhammer symbol. Noting that $S(n
) = \frac{\left(X+Y\right)_n}{n!}$ by the Vandermonde identity, we obtain an extension of the base-$b$ binomial theorem, the central theorem in \cite[(Theorem 2)]{Nguyen2}. Namely,
\begin{equation}\label{2.5}
\prod_{i=0}^{N-1}\binom{X+Y+n_i-1}{n_i} =\sum_{0\leq k \leq_b n}\prod_{i=0}^{N-1} \binom{X+k_i-1}{k_i} \binom{Y+n_i-k_i-1}{k_i} .
\end{equation}


\item
Taking $f\left(n,k
\right) = \binom{x;r}{k}\binom{y;r}{n-k}$ in \eqref{2.1} where $\binom{x;r}{d}:=\frac{x(x+r)\cdots(x+(d-1)r)}{d!}$ \cite[(Definition 8)]{Mansour1} and using \cite[(Lemma 9)]{Mansour1} to find the sum of $f$ over $k$ gives \cite[(Theorem 4)]{Mansour1}, which can be specialized to find $q$-analogues. Namely,
\begin{equation}\label{2.6}
\prod_{i=0}^{N-1}\binom{x_i+y_i;r}{n_i} =\sum_{0\leq k \leq_b n}\prod_{i=0}^{N-1} \binom{x_i;r_i}{k_i} \binom{y_i;r_i}{n_i-k_i} .
\end{equation}

\item
Taking $f\left(n,k
\right) =  \overline{p}_k(x)\overline{s}_{n-k}(y)$ and $f(n,k
) = \overline{p}_k(x)\overline{p}_{n-k}(y)$ in \eqref{2.1} where $s$ and $p$ are normalized Sheffer sequences and using \cite[(Theorem 7)]{Mansour2} to evaluate the sum of $f$ over $k$ gives \cite[(Theorem 1)]{Mansour2}. 
\end{itemize}

As a corollary of \eqref{2.6}, we obtain a base-$b$ analog of the Chu-Vandermonde identity. The Chu-Vandermonde identity states \cite[(Page 8)]{Riordan} that, for integer $m$ and $n$,
\begin{equation}\label{2.7}
\sum_{k=0}^{r}\binom{n}{k}\binom{m}{r-k}=\binom{n+m}{r}.
\end{equation}
This can be extended to complex valued $n$ and $m$. This has a base-$b$ counterpart \cite[(Theorem 8)]{Vignat1} which can be proved by taking $r=1$ in \eqref{2.6} and applying \eqref{1.1}. We note that if the addition of $n$ and $m$ in base $b$ in carry-free, then $\prod_{i=0}^{N-1}\binom{n_i+m_i}{r_i} = \binom{n+m}{r}_b$. Changing variables to maintain consistent notation yields the following analog of the Chu-Vandermonde identity.
\begin{thm} For integral $n$ and $m$ such that their addition in base $b$ in carry-free,
\begin{equation}
\binom{n+m}{r}_b=\sum_{0 \leq k \leq_b r}\binom{n}{k}_b\binom{m}{r-k}_b.
\end{equation}
\end{thm}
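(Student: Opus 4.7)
The plan is to unpack both sides via the digit-wise definition \eqref{1.1} and reduce to the classical Chu-Vandermonde identity \eqref{2.7} applied independently at each base-$b$ position; the combinatorial expansion from the proof of Theorem~\ref{thm1} then glues the digit-wise identities together into the carry-free sum.

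First, I would invoke the hypothesis that $n+m$ is carry-free in base $b$: this means the $i$-th base-$b$ digit of $n+m$ is precisely $n_i+m_i$, so by \eqref{1.1}
\[
\binom{n+m}{r}_b = \prod_{i=0}^{N-1}\binom{n_i+m_i}{r_i}.
\]
Second, at each digit position the classical Chu-Vandermonde identity \eqref{2.7} gives $\binom{n_i+m_i}{r_i} = \sum_{k_i=0}^{r_i}\binom{n_i}{k_i}\binom{m_i}{r_i-k_i}$. Expanding the product of these per-digit identities by distributivity, exactly as in the proof of Theorem~\ref{thm1}, yields
\[
\prod_{i=0}^{N-1}\binom{n_i+m_i}{r_i} = \sum_{0\leq k\leq_b r}\prod_{i=0}^{N-1}\binom{n_i}{k_i}\binom{m_i}{r_i-k_i}.
\]

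Finally, I would recognize the summand as $\binom{n}{k}_b\binom{m}{r-k}_b$. The factor $\binom{n}{k}_b=\prod_i\binom{n_i}{k_i}$ is immediate from \eqref{1.1}. For the factor involving $m$, the digital dominance $k\leq_b r$ forces $k_i\leq r_i$ at every position, so the subtraction $r-k$ is itself carry-free and $(r-k)_i=r_i-k_i$; hence $\binom{m}{r-k}_b=\prod_i\binom{m_i}{r_i-k_i}$. Combining these identifications delivers the claimed identity.

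The main point requiring care, more a bookkeeping matter than a genuine obstacle, is tracking the two carry-free conditions: the hypothesis on $n+m$ lets the left side decompose digit by digit, and the dominance $k\leq_b r$ lets $\binom{m}{r-k}_b$ do the same. Without both, neither side factors cleanly over positions. This argument also corresponds to the specialization of \eqref{2.6} obtained by choosing the parameter value that reduces $\binom{x;r}{d}$ to the ordinary binomial $\binom{x}{d}$.
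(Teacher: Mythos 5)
Your proof is correct and follows essentially the same route as the paper: the paper obtains the identity by specializing the generalized Vandermonde identity \eqref{2.6} (itself an instance of Theorem~\ref{thm1}) and then using the carry-free hypothesis to identify $\prod_i\binom{n_i+m_i}{r_i}$ with $\binom{n+m}{r}_b$, which is exactly your digit-wise Chu--Vandermonde plus distributive expansion. Your write-up is in fact slightly more careful than the paper's one-line justification, since you explicitly verify that $k\leq_b r$ makes the digits of $r-k$ equal to $r_i-k_i$, a step the paper passes over as a change of variables.
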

As with the digital binomial theorem, an identity involving binomial coefficients has an obvious base-$b$ analog. However, we emphasize that the results do not immediately transfer, as can be seen by the restriction on $n$ and $m$ in the base-$b$ Chu-Vandermonde identity. However, the striking similarities between  the digital binomial theorem and binomial theorem ensure that some linear identities transfer almost identically. We list some  analogs of classic binomial identities derived from the binomial theorem, omitting the proofs since they follow classical lines. 

\begin{equation}
\sum_{k=0}^{n}\binom{n}{k}_b  = 2^{s_b(n)}.
\end{equation}
\begin{equation}
\sum_{k=0}^{n}\binom{n}{k}_b s_b(k) = s_b(n)2^{s_b(n)-1}.
\end{equation}
\begin{equation}
\sum_{k=0}^{n}\binom{n}{k}_b {s_b}^2(k) =s_b(n)(s_b(n)+1)2^{s_b(n)-2}.
\end{equation}

\section{Generating Function}\label{genfunc}
From the Theorem \ref{thm1} we can also derive a generating function for digitwise functions, based on the following lemma.
\begin{lemma}\label{genlem}
\begin{equation}
\{k: 0\leq k \leq_b n, 0 \leq n_i \leq b-1, 0 \leq i < \infty \} = \{k: 0 \leq k \leq \infty \}.
\end{equation}
\begin{proof}
There is a bijection between both sets. To see this, fix a base $b$. Every natural number has a unique representation in base $b$ which occurs in the left-most set, while every number in the left-most set corresponds to a unique natural number $k$.
\end{proof}
\end{lemma}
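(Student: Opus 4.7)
The plan is to establish the set equality by exhibiting an explicit bijection, as the statement itself suggests. The substance lies entirely in the uniqueness of the base-$b$ expansion of nonnegative integers; I expect no genuine mathematical obstacle, only a point of interpretation.

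First I would argue the inclusion $\supseteq$: for any $k$ in the right-hand set, i.e.\ any nonnegative integer, I write its unique base-$b$ expansion $k = \sum_{i \geq 0} k_i b^i$ with digits $k_i \in \{0, \dots, b-1\}$, only finitely many nonzero. Take $n := k$ as the required witness. Then the digits $n_i = k_i$ satisfy $0 \leq n_i \leq b-1$, and $k \leq_b n$ holds trivially since $k_i \leq n_i$ for every $i$. Hence $k$ belongs to the left-hand set.

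For the reverse inclusion $\subseteq$, any element $k$ of the left-hand set is by hypothesis a nonnegative integer (with $0 \leq k \leq_b n$ for some $n$), so it automatically belongs to the right-hand set.

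The only delicate point is one of interpretation: the clause $0 \leq n_i \leq b-1$ for $0 \leq i < \infty$ simply says that the sequence $(n_i)$ is a legitimate, eventually-zero string of base-$b$ digits, which amounts to $n$ ranging unrestrictedly over the nonnegative integers. Once this is made explicit, the assignment $k \mapsto k$ furnishes the desired bijection and the equality of the two sets is immediate. The main value of the lemma, rather than its proof, is that it lets sums of the form $\sum_{0 \leq k \leq_b n} \prod_i f(n_i, k_i)$ be reindexed as sums over all nonnegative integers $k$, which is what will power the generating function constructions in the sequel.
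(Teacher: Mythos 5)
Your proof is correct and rests on the same observation as the paper's: the uniqueness of the base-$b$ expansion of a nonnegative integer gives the bijection between the two sets, which you simply spell out as two inclusions (the witness $n:=k$ for $\supseteq$, triviality for $\subseteq$). The paper's own proof is a one-line statement of this same bijection, so there is no substantive difference in approach.
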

We now state without proof the straightforward extension of \eqref{2.1}: let  $S(n,i):=\sum_{k=0}^{n} f(n,k,i)$. Then
\begin{equation}\label{ext}
\prod_{i=0}^{N-1} S\left(n_i,i\right) = \sum_{0\leq k \leq_b n} \prod_{i=0}^{N-1}f(n_i, k_i,i).
\end{equation}
We then extend the summation over digitally dominated $k$ in \eqref{ext} to a sum over all natural numbers $k$. This means by letting the number of digits in \eqref{ext} tend to infinity while letting each $n_i = b-1$ and applying Lemma \ref{genlem} we can convert our sum over digitally dominated $k$ to a sum over natural numbers.

\begin{thm}\label{thm2}
\begin{equation}
\label{eqthm2}
\prod_{i=0}^{\infty} \sum_{k=0}^{b-1} f(k,i) = \sum_{k=0}^{\infty} \prod_{i=0}^{\infty} f(k_i,i).
\end{equation}
\begin{proof}
Fixing a base $b$ and real $x$ and setting $n_i=b-1$ in \eqref{ext} while taking $N \rightarrow \infty$, we can apply the preceeding lemma to \eqref{ext} as
\begin{equation}
\prod_{i=0}^{\infty} \sum_{k_i=0}^{b-1} f(k_i,i) = \sum_{k=0}^{\infty} \prod_{i=0}^{\infty} f(k_i,i).
\end{equation}
We can remove the dependence of $k$ on $i$ on the left hand side since each $k$ goes from $0$ to $b-1$ identically, which completes the proof.
\end{proof}
\end{thm}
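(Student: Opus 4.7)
The plan is to specialize the extension \eqref{ext} of Theorem \ref{thm1} to the case where every digit of $n$ is as large as possible, namely $n_i = b-1$, and then take the number of digits $N$ to infinity. With that choice, the digital dominance constraint $0 \leq k \leq_b n$ becomes progressively weaker, and Lemma \ref{genlem} is precisely the statement that in the limit the carry-free sum becomes an unrestricted sum over $\N$.

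Concretely, I would first apply \eqref{ext} with a fixed truncation parameter $N$ and $n_i = b-1$ for each $0 \leq i < N$, giving
\[
\prod_{i=0}^{N-1} \sum_{k_i=0}^{b-1} f(k_i, i) = \sum_{0 \leq k \leq_b n} \prod_{i=0}^{N-1} f(k_i, i).
\]
On the left-hand side the $N$ inner summation variables $k_0, k_1, \ldots, k_{N-1}$ are independent dummies whose common range $\{0, 1, \ldots, b-1\}$ does not depend on $i$, so I may rename each back to a single symbol $k$ to obtain $\prod_{i=0}^{N-1} \sum_{k=0}^{b-1} f(k, i)$. I would then let $N \to \infty$ and invoke Lemma \ref{genlem} to rewrite the right-hand sum $\sum_{0 \leq k \leq_b n}$ as $\sum_{k=0}^{\infty}$, reading the index $k_i$ inside the product as the $i$-th base-$b$ digit of $k$, which is well defined and equal to $0$ for all but finitely many $i$. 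This delivers the claimed identity \eqref{eqthm2}.

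The main obstacle is analytical rather than combinatorial: commuting the limit $N \to \infty$ with the infinite product on the left and the infinite sum on the right requires a convergence hypothesis on $f$, for instance that $f(0,i) \to 1$ sufficiently rapidly as $i \to \infty$ so that the tail products and tail sums are under control. Interpreted as a formal identity between series and products, however, the proof is complete as soon as \eqref{ext} and Lemma \ref{genlem} have been established, so no further combinatorial ingredient is needed.
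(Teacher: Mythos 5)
Your proposal is correct and follows essentially the same route as the paper's proof: setting $n_i = b-1$ in \eqref{ext}, letting $N \to \infty$, invoking Lemma \ref{genlem} to convert the carry-free sum into an unrestricted sum over $\N$, and renaming the dummy variables on the left. Your added caveat about commuting the limit with the infinite sum and product (or reading the identity formally) is a point the paper passes over silently, but it does not change the argument.
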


\begin{cor}\label{genthm}
\begin{equation}
\prod_{i=0}^{\infty} \sum_{k=0}^{b-1} f(k,i) x^{b^i k} = \sum_{k=0}^{\infty}x^k \prod_{i=0}^{\infty} f(k_i,i).
\end{equation}
\begin{proof}
Making the substitution $f(k_i,i) = x^{b^i k_i} f(k_i,i)$ in \eqref{eqthm2} and noting
\begin{equation}
\prod_{i=0}^{\infty} x^{b^i k_i} f(k_i,i) = x^{\sum_{i=0}^{\infty}b^i k_i } \prod_{i=0}^{\infty} f(k_i,i) = x^k  \prod_{i=0}^{\infty} f(k_i,i)
\end{equation}
completes the proof.
\end{proof}
\end{cor}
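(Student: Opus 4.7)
The plan is to deduce the corollary directly from Theorem \ref{thm2} via a simple substitution, since the extra factor $x^{b^i k}$ on the left is designed to telescope across the product into a single power $x^k$ indexed by the natural number whose base-$b$ digits are the $k_i$.

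Concretely, in equation \eqref{eqthm2} I would replace the function $f(k,i)$ by the new function $g(k,i) := x^{b^i k} f(k,i)$, where $x$ is treated as a formal (or real) parameter independent of the summation indices. Applying Theorem \ref{thm2} to $g$ immediately yields
\[
\prod_{i=0}^{\infty} \sum_{k=0}^{b-1} f(k,i) x^{b^i k} \;=\; \sum_{k=0}^{\infty} \prod_{i=0}^{\infty} x^{b^i k_i} f(k_i, i).
\]
The left-hand side is already in the desired form, so the remaining task is to simplify the right-hand side.

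The key step is to extract the $x$-factors from the inner product. Since $x$ does not depend on $i$, one has $\prod_{i=0}^{\infty} x^{b^i k_i} = x^{\sum_{i=0}^{\infty} b^i k_i}$, and by definition of the base-$b$ digits, $\sum_{i=0}^{\infty} b^i k_i = k$. Thus $\prod_{i=0}^{\infty} x^{b^i k_i} f(k_i,i) = x^k \prod_{i=0}^{\infty} f(k_i,i)$, giving the stated identity.

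The only subtle point — the ``main obstacle,'' though it is mild — is to justify that passing from $f$ to $g(k,i) = x^{b^i k} f(k,i)$ preserves the hypotheses implicit in Theorem \ref{thm2}, namely that the relevant infinite product and sum converge (or are interpreted formally). In a formal power series setting in $x$ this is automatic, since for each fixed coefficient only finitely many indices $i$ contribute nontrivially (only digits $k_i \neq 0$ matter for the exponent), so the substitution is legitimate and the rearrangement going from $\prod x^{b^i k_i}$ to $x^{\sum b^i k_i}$ is a finite manipulation at each coefficient.
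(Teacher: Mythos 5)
Your proposal is correct and follows essentially the same route as the paper: substitute $g(k,i)=x^{b^i k}f(k,i)$ into Theorem \ref{thm2} and collapse $\prod_i x^{b^i k_i}$ into $x^{\sum_i b^i k_i}=x^k$. Your added remark on the formal-power-series justification is a minor refinement the paper omits, but the core argument is identical.
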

This means that any base-$b$ function which can be represented as a digitwise product has a closed form generating function.

\section{Stirling numbers of the second kind}
\label{Stirling}
The Stirling numbers of the second kind $\stirling{n}{k}$ count the number of ways to partition a set of $n$ elements into $k$ non-empty subsets. They have a natural base-$b$ analog, defined as
\begin{equation}
\label{Stirling_definition}
\stirling{n}{k}_{b}= \prod_{i=0}^{N-1} \stirling{n}{k_i}.
\end{equation}
\begin{thm}
The base-$b$ Stirling numbers of the second kind satisfy
\begin{equation}\label{3.1}
\stirling{n}{k}_{b} = \frac{1}{(k!)_b}\sum_{j=0}^k (-1)^{s_b(k)-s_b(j)}\binom{k}{j}_b \left(\prod_{i=0}^{N-1}j_i\right)^n.
\end{equation}
\begin{proof}
From \cite[(Page 90)]{Riordan} the Stirling numbers of the second kind can be represented as
\begin{equation}\label{3.2}
\stirling{n}{k} = \frac{1}{k!}\sum_{j=0}^{k}(-1)^{k-j}\binom{k}{j}j^n.
\end{equation}
Rearranging this relation and changing variables to maintain consistency with \eqref{2.3} yields
\begin{equation}\label{3.3}
\sum_{k=0}^{n}(-1)^{k}\binom{n}{k}k^\alpha = (-1)^n n! \stirling{\alpha}{n}.
\end{equation}
Taking $f(n,k)=(-1)^k k^\alpha$ in \eqref{2.3} and noting that $S_2(n)$ is nothing more than the left hand side of \eqref{3.2}, we obtain
\begin{equation}\label{3.4}
\prod_{i=0}^{N-1}(-1)^{n_i}n_i!
\stirling{\alpha}{n_i} = \sum_{k=0}^n \binom{n}{k}_b \prod_{i=0}^{N-1}(-1)^{k_i} {k_i}^\alpha.
\end{equation}
Simplifying the product on both sides yields
\begin{equation}\label{3.5}
(-1)^{s_b(n)}(n!)_b\prod_{i=0}^{N-1}
\stirling{\alpha}{n_i}= \sum_{k=0}^n \binom{n}{k}_b (-1)^{s_b(k)}\left(\prod_{i=0}^{N-1}k_i\right)^\alpha.
\end{equation}
Rearranging then gives
\begin{equation}\label{3.6}
\prod_{i=0}^{N-1}
\stirling{\alpha}{n_i}= \frac{1}{(n!)_b}\sum_{k=0}^n (-1)^{s_b(n)-s_b(k)}\binom{n}{k}_b \left(\prod_{i=0}^{N-1}k_i\right)^\alpha,
\end{equation}
which can be compared with \eqref{3.2}. The parallels are immediately obvious. Renaming variables completes the proof.
\end{proof}
\end{thm}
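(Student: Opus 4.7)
My strategy is to apply the corollary of Theorem \ref{thm1}, namely \eqref{2.3}, to a rearranged form of the classical explicit formula
\[
\stirling{n}{k} = \frac{1}{k!}\sum_{j=0}^{k}(-1)^{k-j}\binom{k}{j}j^n.
\]
The observation driving the proof is that this identity already expresses a Stirling number as a $\binom{k}{j}$-weighted sum of a simple function of $j$, so if the summand $f$ fed into \eqref{2.3} is chosen so that $S_2$ equals (a scalar multiple of) a Stirling number, then the left-hand side of \eqref{2.3} will collapse into a product of Stirling numbers indexed by the digits of $n$---which, up to a renaming of variables, is precisely the base-$b$ Stirling number from \eqref{Stirling_definition}.

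Concretely, I would first recast the classical identity as
\[
\sum_{k=0}^{n}(-1)^{k}\binom{n}{k}k^\alpha = (-1)^n n!\stirling{\alpha}{n},
\]
so that the choice $f(n,k)=(-1)^k k^\alpha$ gives $S_2(n)=(-1)^n n!\stirling{\alpha}{n}$ in the notation of the corollary. Substituting this $f$ into \eqref{2.3} produces
\[
\prod_{i=0}^{N-1}(-1)^{n_i}n_i!\stirling{\alpha}{n_i} = \sum_{k=0}^n \binom{n}{k}_b \prod_{i=0}^{N-1}(-1)^{k_i}k_i^\alpha.
\]
Routine digitwise simplification then collapses the signs to $(-1)^{s_b(n)}$ and $(-1)^{s_b(k)}$, the factorials to $(n!)_b$, and $\prod_i k_i^\alpha$ to $\bigl(\prod_i k_i\bigr)^\alpha$. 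Dividing through by $(-1)^{s_b(n)}(n!)_b$ and relabeling $\alpha \mapsto n$, $n \mapsto k$ and the summation index $k \mapsto j$ converts the product on the left into $\stirling{n}{k}_b$ and the right-hand side into the claimed formula.

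I do not anticipate any serious conceptual obstacle; the argument is driven entirely by the summation mechanism of \eqref{2.3}, and the only step requiring care is the final variable rename. Because the definition $\stirling{n}{k}_b = \prod_i \stirling{n}{k_i}$ keeps $n$ undecomposed digitwise, the classical exponent $\alpha$ must become the new $n$, rather than the binomial upper index. Keeping the two roles of ``$n$'' distinct throughout the bookkeeping is the main point of friction.
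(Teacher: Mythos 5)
Your proposal is correct and follows essentially the same route as the paper's own proof: the same rearrangement of the classical explicit formula into $\sum_{k=0}^{n}(-1)^{k}\binom{n}{k}k^\alpha = (-1)^n n!\stirling{\alpha}{n}$, the same choice $f(n,k)=(-1)^k k^\alpha$ in \eqref{2.3}, and the same digitwise simplification followed by the relabeling $\alpha\mapsto n$, $n\mapsto k$, $k\mapsto j$. Your explicit remark about keeping the undecomposed upper index separate from the digitwise lower index is exactly the point the paper glosses over with ``renaming variables completes the proof.''
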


In general, any sequence of numbers with an explicit representation as a sum involving a binomial coefficient will have a base-$b$ analog. But this representation is always possible, since an arbitrary sequence $\left(a_n\right)$ can always expressed as \cite[p.43]{Riordan}
\[
a_n = \sum_{k=0}^{n} \binom{n}{k}\left(-1\right)^k b_k
\]
with the sequence $\left(b_n\right)$ defined by
\[
b_n = \sum_{k=0}^{n} \binom{n}{k}\left(-1\right)^k a_k.
\]
These base-$b$ Stirling numbers can then be used to generalize certain results about the Stirling numbers. A fundamental result concerning the differential operator $\vartheta:=x D$, $D:=\frac{d}{dx}$, found in \cite[(Page 218)]{Riordan}, is
\begin{equation}\label{3.7}
\vartheta^n = \sum_{k=0}^{n}
\stirling{n}{k}x^k D^k.
\end{equation}

\begin{thm}
Define a multivariable differential operator $\vartheta_{N}:=x_0x_1\cdots x_{N-1}D_0 D_1 \cdots D_{N-1}$ where $D_i:=\frac{\partial}{\partial x_i}$. Then
\begin{equation}\label{3.8}
\vartheta_{N}^n = \sum_{k_1,\ldots, k_{N-1}\leq n}
\stirling{n}{k}_{b}\prod_{i=0}^{N-1}x_i^{k_i}D_i^{k_{i}}.
\end{equation}
\begin{proof}
By the equality of mixed partial derivatives,
\begin{equation}\label{3.9}
\vartheta_{N}^n = \left(x_0D_0x_1D_1\cdots x_{N-1}D_{N-1}\right)^n = (x_0D_0)^n(x_1D_1)^n \cdots (x_{N-1}D_{N-1})^n.
\end{equation}
Expanding each term using \eqref{3.7} gives
\begin{align}
\vartheta_{N}^n &= \left(\sum_{k_0=0}^{n}
\stirling{n}{k_0} x_0^{k_0} D_0^{k_{0}}\right)\cdots \left(\sum_{k_{N-1}=0}^{n}
\stirling{n}{k_{N-1}} x_{N-1}^{k_{N-1}} D_{N-1}^{k_{N-1}}\right) \nonumber \\
&=\sum_{k_1,\ldots, k_{N-1}\leq n} \prod_{i=0}^{N-1}
\stirling{n}{k_i} x_i^{k_i}D_i^{k_{i}} \nonumber \\
&=\sum_{k_1,\ldots, k_{N-1}\leq n} 
\stirling{n}{k}_{b}\prod_{i=0}^{N-1}x_i^{k_i}D_i^{k_{i}} \label{3.10}.
\end{align}
In the second equality, we used the interchange of summation presented in \eqref{2.1}. 
\end{proof}
\end{thm}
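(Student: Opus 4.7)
The plan is to reduce the multivariable statement to $N$ independent applications of the classical one-variable identity \eqref{3.7}, using the fact that differential operators acting on distinct variables commute.

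First I would verify that the factors $x_iD_i$ commute pairwise: since $[x_i,x_j]=[D_i,D_j]=0$ for all $i,j$ and $[x_i,D_j]=0$ whenever $i\neq j$, each pair $x_iD_i$ and $x_jD_j$ with $i\neq j$ commutes. This justifies rewriting
\[
\vartheta_N^n=\bigl(x_0D_0\,x_1D_1\cdots x_{N-1}D_{N-1}\bigr)^n=\prod_{i=0}^{N-1}(x_iD_i)^n,
\]
which is really the only place where the multivariable nature of the problem enters.

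Next I would apply \eqref{3.7} to each individual factor, replacing $(x_iD_i)^n$ by $\sum_{k_i=0}^{n}\stirling{n}{k_i}x_i^{k_i}D_i^{k_i}$. Expanding the resulting product of $N$ one-dimensional sums gives a single sum over the multi-index $(k_0,\dots,k_{N-1})$ with each $k_i\in\{0,\dots,n\}$; this is exactly the interchange-of-product-and-sum already used in Theorem \ref{thm1}. The summand is $\bigl(\prod_{i=0}^{N-1}\stirling{n}{k_i}\bigr)\prod_{i=0}^{N-1}x_i^{k_i}D_i^{k_i}$, and recognizing the Stirling product as the base-$b$ Stirling number via \eqref{Stirling_definition} yields precisely the right-hand side of \eqref{3.8}.

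I do not expect any real obstacle: once commutativity of the $x_iD_i$ on disjoint variables is observed, the argument is simply iteration of the known single-variable identity and an index-reshuffle identical to the one behind \eqref{2.1}. The only subtlety worth flagging is that here each $k_i$ runs up to $n$ rather than being constrained to $\{0,\dots,b-1\}$, so the notation $\stirling{n}{k}_{b}$ in \eqref{3.8} must be read as the formal digitwise product in \eqref{Stirling_definition} over an arbitrary $N$-tuple of nonnegative integers, rather than as an evaluation at some integer $k$ whose base-$b$ digits happen to be $(k_0,\dots,k_{N-1})$.
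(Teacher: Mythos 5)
Your proposal is correct and follows essentially the same route as the paper: commute the $x_iD_i$ factors to write $\vartheta_N^n=\prod_i(x_iD_i)^n$, apply \eqref{3.7} to each factor, and expand the product of sums into a sum over multi-indices exactly as in \eqref{2.1}. Your closing remark about reading $\stirling{n}{k}_b$ as the formal digitwise product over an arbitrary tuple $(k_0,\dots,k_{N-1})$ is a sensible clarification that the paper leaves implicit.
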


An explicit expression of the base-$b$ Stirling numbers as sum over partitions can be obtained as follows.
\begin{thm}
\begin{equation}
\stirling{n}{k}_b = \sum_{j}\prod_{|J|=j}k_J
\stirling{n-1}{k_J}_b\stirling{n-1}{k_{\bar{J}}-1}_b,
\end{equation}
where the product is over all partitions $J$ of $\{1,\ldots,n\}$ and $\bar{J}=\{1,\ldots,n\}\char`\\ J$
\begin{proof}
We begin with the Pascal-type recurrence \cite[(4)]{Agoh1} 
$\stirling{n}{k}=\stirling{n-1}{k-1}+k\stirling{n-1}{k}$ and substitute into the first equality in \eqref{3.1}. Namely,
\begin{equation}
\stirling{n}{k}_b = \prod_{i=0}^{N-1}\stirling{n}{k_i} = \prod_{i=0}^{N-1}\left( 
\stirling{n-1}{k_{i}-1} +k_{i} \stirling{n-1}{k_{i}}\right) ,
\end{equation}
from which the theorem follows.
\end{proof}
\end{thm}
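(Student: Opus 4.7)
The strategy is to apply the classical Pascal-type recurrence for Stirling numbers of the second kind digit-by-digit, then expand the resulting product of binary sums. Starting from the definition $\stirling{n}{k}_b = \prod_{i=0}^{N-1}\stirling{n}{k_i}$ and substituting the recurrence $\stirling{n}{k_i} = \stirling{n-1}{k_i-1} + k_i \stirling{n-1}{k_i}$ into each factor gives
\[
\stirling{n}{k}_b = \prod_{i=0}^{N-1}\bigl(\stirling{n-1}{k_i-1} + k_i\stirling{n-1}{k_i}\bigr),
\]
which is just a product of $N$ binomials.

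Next, I would distribute this product over the $N$ binary choices. Each term in the expansion corresponds to selecting, for every digit position $i \in \{0,1,\ldots,N-1\}$, either the $k_i\stirling{n-1}{k_i}$ summand or the $\stirling{n-1}{k_i-1}$ summand; this choice is encoded by a subset $J \subseteq \{0,\ldots,N-1\}$ collecting the indices that contribute the first type. The resulting identity is
\[
\stirling{n}{k}_b = \sum_{J \subseteq \{0,\ldots,N-1\}}\Bigl(\prod_{i \in J}k_i\Bigr)\Bigl(\prod_{i \in J}\stirling{n-1}{k_i}\Bigr)\Bigl(\prod_{i \in \bar{J}}\stirling{n-1}{k_i-1}\Bigr),
\]
which can then be regrouped by $j := |J|$ and interpreted as the claimed formula by reading the inner products as base-$b$ Stirling-type objects on the digit substrings indexed by $J$ and $\bar{J}$.

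The main obstacle is purely notational: the statement writes ``partitions $J$ of $\{1,\ldots,n\}$'' and uses symbols $k_J$, $\stirling{n-1}{k_J}_b$ which, to be consistent with the digit-by-digit expansion above, must be reinterpreted as referring to subsets of the digit index set together with the corresponding digit products $\prod_{i \in J} k_i$ and restricted base-$b$ Stirling factors $\prod_{i \in J}\stirling{n-1}{k_i}$. Once these conventions are pinned down, the identity follows immediately, since no combinatorial content is needed beyond the elementary distributive expansion $\prod_i(a_i+b_i) = \sum_{J}\prod_{i \in J}a_i \prod_{i \notin J}b_i$.
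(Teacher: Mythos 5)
Your proposal is correct and follows exactly the paper's route: substitute the Pascal-type recurrence $\stirling{n}{k_i}=\stirling{n-1}{k_i-1}+k_i\stirling{n-1}{k_i}$ into each factor of $\prod_{i=0}^{N-1}\stirling{n}{k_i}$ and expand the product of binomials over subsets of the digit positions. You in fact supply more detail than the paper, which stops at the substitution and says ``from which the theorem follows,'' and your remark that the statement's ``partitions of $\{1,\ldots,n\}$'' must really mean subsets $J$ of the digit index set $\{0,\ldots,N-1\}$ is the right reading of the (admittedly garbled) notation.
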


As a last remark, we start from the well-known representation of the Stirling numbers of the second
kind 
\[
\stirling{n}{k}=\frac{\left(-1\right)^{k}}{k!}\Delta^{k}x^{n}\thinspace_{\vert x=0}
\]
where $\Delta$ is the forward difference operator
\[
\Delta f\left(x\right)=f\left(x+1\right)-f\left(x\right).
\]
With $s_{b}\left(n\right)$ denoting the sum of digits of $n$ in
base $b$, we deduce from \eqref{Stirling_definition} the representation of the base $b$ Stirling
numbers as
\[
\stirling{n}{k}_{b}=\frac{\left(-1\right)^{s_{b}\left(k\right)}}{k!_{b}}\Delta^{s_{b}\left(k\right)}x^{n}\thinspace_{\vert x=0}
\]

The base-$b$ Stirling number naturally occurs as a consequence of creating a multivariable generalization of an existing single variable identity, a process which could be explored further in the future. Additionally, the base $b$ has no special significance here so long as it is larger than $k_i$, suggesting that $\stirling{n}{k}_{b}$ has a larger combinatorial significance outside of the sum of digits function.

\section{Fibonacci numbers}
\label{Fibonacci}
In this section, we introduce one further analogue of classic numbers with well-studied properties. The Fibonacci numbers $F_n$ are defined by the recurrence $F_{n+1} = F_n + F_{n-1}$ and the initial values $F_0=1$ and $F_1 = 1$.

\subsection{Definition}

\begin{thm}
Define the base-$b$ generalized Fibonacci numbers $F^{\left(b\right)}_{n}$ as
\begin{equation}\label{4.0}
F^{\left(b\right)}_{n} = \prod_{i=0}^{N-1}F_{n_i}.
\end{equation}
Then these numbers satisfy
\begin{equation}\label{4.1}
F^{\left(b\right)}_{n} = \sum_{k \le_{b} n}\binom{n-k}{k}_b.
\end{equation}
\begin{proof}
By summing over shallow diagonals of Pascal's triangle, we have the relation
\begin{equation}\label{4.3}
F_n = \sum_{k=0}^{n}\binom{n-k}{k}.
\end{equation}
Taking $f(n,k)=\binom{n-k}{k}$ in \eqref{2.1} and utilizing the relation \eqref{4.3} yields \eqref{4.1}.
\end{proof}
\end{thm}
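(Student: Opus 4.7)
The plan is to invoke Theorem~\ref{thm1} directly with the choice $f(n,k) = \binom{n-k}{k}$. The strategy mimics how the other examples in Section~\ref{CarryfreeSums} reduce familiar single-variable identities to their base-$b$ analogues: identify a classical sum, recognize its closed-form evaluation, and then read off the base-$b$ version from \eqref{2.1}.

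First, I would recall the shallow-diagonal identity $F_n = \sum_{k=0}^{n} \binom{n-k}{k}$, which one checks is consistent with the convention $F_0 = F_1 = 1$ adopted here (the summand vanishes for $k > n/2$, giving the usual Fibonacci recurrence by Pascal's rule). Setting $f(n,k) = \binom{n-k}{k}$ makes $S(n) = F_n$, so the left-hand side of \eqref{2.1} becomes $\prod_{i=0}^{N-1} F_{n_i} = F^{(b)}_n$, while the right-hand side becomes
\[
\sum_{0 \le k \le_b n} \prod_{i=0}^{N-1} \binom{n_i - k_i}{k_i}.
\]

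It remains to recognize the summand as $\binom{n-k}{k}_b$. The key observation is that digital dominance $k \le_b n$ ensures the subtraction $n-k$ is carry-free in base $b$, so that the base-$b$ digits of $n-k$ are precisely $(n-k)_i = n_i - k_i$. Hence from definition \eqref{1.1},
\[
\binom{n-k}{k}_b = \prod_{i=0}^{N-1} \binom{(n-k)_i}{k_i} = \prod_{i=0}^{N-1} \binom{n_i - k_i}{k_i},
\]
which matches the summand and yields \eqref{4.1}.

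The proof is essentially one line once Theorem~\ref{thm1} is granted, so there is no substantial obstacle. The only point that requires a moment's care is the borrow-free subtraction observation in the last step; without it, one could not recombine the per-digit binomial coefficients into a single base-$b$ binomial coefficient $\binom{n-k}{k}_b$, and the resulting sum would not match the desired right-hand side of \eqref{4.1}.
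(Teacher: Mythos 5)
Your proof is correct and follows essentially the same route as the paper: apply Theorem~\ref{thm1} with $f(n,k)=\binom{n-k}{k}$ and the shallow-diagonal identity $F_n=\sum_{k=0}^{n}\binom{n-k}{k}$. You additionally make explicit the step the paper leaves implicit, namely that digital dominance makes the subtraction $n-k$ borrow-free so that $\prod_{i}\binom{n_i-k_i}{k_i}=\binom{n-k}{k}_b$, which is a worthwhile clarification but not a different argument.
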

Rather than the Stirling numbers, which have combinatorial significance, the Fibonacci numbers are well studied objects in number theory. 

\subsection{The case $b=3$}

The sequence $F_{n}^{\left(3\right)}$, for $n\ge0,$ starts with  
\[
1,1,2,1,1,2,2,2,4,1,1,2,1,1,2,2,2,4,2,2,4,2,2,4,4,4,8,1,1,2\dots
\]

We recognize the beginning of sequence $A117592$ in OEIS defined by 
\[
a\left(3n\right)=a\left(n\right),\thinspace\thinspace a\left(3n+1\right)=a\left(n\right),\thinspace\thinspace a\left(3n+2\right)=2a\left(n\right)
\]
and
\[
a\left(0\right)=a\left(1\right)=1,\thinspace\thinspace a\left(2\right)=2.
\]

\begin{thm}
The sequence $\left\{ F_{n}^{\left(3\right)}\right\} $ satisfies
the recurrence
\[
F_{3n}^{\left(3\right)}=F_{n}^{\left(3\right)},\thinspace\thinspace F_{3n+1}^{\left(3\right)}=F_{n}^{\left(3\right)},\thinspace\thinspace F_{3n+2}^{\left(3\right)}=2F_{n}^{\left(3\right)}
\]
with initial conditions
\[
F_{0}^{\left(3\right)}=F_{1}^{\left(3\right)}=1,\thinspace\thinspace F_{2}^{\left(3\right)}=2
\]
so that it coincides with OEIS sequence $A117592$.

\begin{proof}
By our main theorem, the base-$b$ Fibonacci sequence satisfies
\[
F_{m}^{\left(b\right)}=\prod_{i}F_{m_{i}}.
\]
The digits of $m=3n$ (resp. $m=3n+1$ and $m=2n+2$) coincide with those
of $n$ up to an extra $0$ (resp. $1$ and $2$) appended at the right.
We deduce
\begin{equation}
\label{F3n}
F_{3n}^{\left(3\right)}=F_{n}^{\left(3\right)}F_{0}=F_{n}^{\left(3\right)}
\end{equation}
and
\begin{equation}
\label{F3n+1}
F_{3n+1}^{\left(3\right)}=F_{n}^{\left(3\right)}F_{1}=F_{n}^{\left(3\right)}
\end{equation}
and
\begin{equation}
\label{F3n+2}
F_{3n+2}^{\left(3\right)}=F_{n}^{\left(3\right)}F_{2}=2F_{n}^{\left(3\right)}.
\end{equation}
The initial conditions are easily checked.
\end{proof}
\end{thm}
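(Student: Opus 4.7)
The plan is to prove the three recurrence relations directly from the product definition $F_m^{(b)} = \prod_i F_{m_i}$ given in \eqref{4.0}, by comparing the base-$3$ digit expansions of $3n$, $3n+1$, $3n+2$ with that of $n$. The crucial observation is that multiplication by the base shifts digits to the left: if $n = \sum_{i=0}^{N-1} n_i 3^i$, then $3n = \sum_{i=0}^{N-1} n_i 3^{i+1}$, whose base-$3$ expansion is the digit string of $n$ with a fresh $0$ inserted at position $0$. Adding $1$ or $2$ produces no carry (since $0,1,2 < 3$), so the expansions of $3n+1$ and $3n+2$ agree with that of $3n$ in every position except the least significant, where the digit becomes $1$ or $2$ respectively.

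Given this digit structure, each of the three products factors cleanly: $F_{3n+\varepsilon}^{(3)} = F_\varepsilon \cdot \prod_i F_{n_i} = F_\varepsilon \cdot F_n^{(3)}$ for $\varepsilon \in \{0,1,2\}$. Plugging in the classical Fibonacci values $F_0 = 1$, $F_1 = 1$, $F_2 = 2$ produces exactly the three stated recurrences \eqref{F3n}, \eqref{F3n+1}, \eqref{F3n+2}. The initial conditions $F_0^{(3)} = F_1^{(3)} = 1$ and $F_2^{(3)} = 2$ are immediate from the definition, since $0$, $1$, and $2$ each have a single base-$3$ digit, so the product collapses to a single classical Fibonacci value.

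There is no real obstacle; the only thing to be careful about is the indexing convention that ``appending at the right'' refers to insertion at the low-order (index $0$) position, and that no carry propagates upward when one adds $\varepsilon \in \{0,1,2\}$, which is exactly what makes the product split multiplicatively. To finish the identification with OEIS sequence $A117592$, I would simply note that the recurrence expresses $F_m^{(3)}$ for $m \geq 3$ in terms of $F_n^{(3)}$ with $n < m$, so the sequence is uniquely determined by the three recurrences together with the three stated initial values; since these match the defining data of $A117592$ verbatim, the two sequences coincide.
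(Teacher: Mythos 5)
Your proposal is correct and follows essentially the same route as the paper: both arguments rest on the product definition $F_m^{(3)}=\prod_i F_{m_i}$ together with the observation that the base-$3$ expansion of $3n+\varepsilon$ is that of $n$ with the digit $\varepsilon$ appended in the low-order position, giving $F_{3n+\varepsilon}^{(3)}=F_\varepsilon F_n^{(3)}$. Your extra remark that the recurrence plus initial values uniquely determine the sequence, hence force agreement with $A117592$, makes explicit a step the paper leaves implicit, but it is not a different method.
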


As a consequence of identities \eqref{F3n}, \eqref{F3n+1} and \eqref{F3n+2}, we have
\begin{cor}
The Fibonacci numbers $F_{n}^{\left(3\right)}$ satisfy the recurrence
\[
F_{3n+2}^{\left(3\right)}=F_{3n}^{\left(3\right)}+F_{3n+1}^{\left(3\right)}
\]
which can be interpreted as a $\mod 3$ version of the usual recursion on the Fibonacci numbers
\[
F_{n+2}=F_{n+1}+F_{n}.
\]
\end{cor}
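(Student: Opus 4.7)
The plan is to chain together the three identities established in the preceding theorem, namely \eqref{F3n}, \eqref{F3n+1}, and \eqref{F3n+2}. Concretely, I would add \eqref{F3n} and \eqref{F3n+1} to obtain
\[
F_{3n}^{(3)} + F_{3n+1}^{(3)} = F_n^{(3)} + F_n^{(3)} = 2 F_n^{(3)},
\]
and then invoke \eqref{F3n+2} to rewrite $2F_n^{(3)}$ as $F_{3n+2}^{(3)}$. Concatenating these two steps produces the desired recurrence in a single line.

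There is no substantial obstacle here; the corollary is essentially a bookkeeping consequence of the three identities already proven. The only nontrivial content is interpretive: one should emphasize that this recurrence is valid only on a complete block of three consecutive indices $\{3n, 3n+1, 3n+2\}$ anchored at a multiple of $3$, rather than at a generic index. In other words, it does not express $F_{m+2}^{(3)}$ in terms of $F_{m+1}^{(3)}$ and $F_{m}^{(3)}$ for arbitrary $m$; it captures the analogy with the classical Fibonacci recursion only when the final digit in base $3$ resets from $0,1$ to $2$, which is precisely why the author describes it as a $\bmod\,3$ version of $F_{n+2}=F_{n+1}+F_n$.
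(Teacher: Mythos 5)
Your argument is correct and is exactly the intended one: the paper presents this corollary as an immediate consequence of \eqref{F3n}, \eqref{F3n+1} and \eqref{F3n+2}, namely $F_{3n}^{(3)}+F_{3n+1}^{(3)}=F_n^{(3)}+F_n^{(3)}=2F_n^{(3)}=F_{3n+2}^{(3)}$. Your additional remark that the recurrence only holds on blocks anchored at multiples of $3$ is a fair and accurate gloss on why the paper calls it a $\bmod\ 3$ version of the classical recursion.
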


A combinatorial interpretation for the sequence $F_n^{\left(3\right)}$ is obtained using the base-$3$ expansion
of the integer $n$, which consist of a sequence of $0$'s, $1$'s and $2$'s, the number of which we call respectively 
$s_3\left(n,0\right),\,\,s_3\left(n,1\right)$ and $s_3\left(n,2\right)$. Then from definition \eqref{4.0}, we deduce
\[
F_n^{\left(3\right)} = F_{0}^{s_3\left(n,0\right)}F_{1}^{s_3\left(n,1\right)}F_{2}^{s_3\left(n,2\right)}
=2^{s_3\left(n,2\right)}
\]
so that $F_n^{\left(3\right)}$ essentially counts the number of $2$'s in the base $3$ representation of $n.$
We remark that this interpretation does not extend to the case of a base $b >3$.

\subsection{The general case}

This suggests the more general result as follows. Its proof is omitted
since it is identical to the previous one.
\begin{thm}
For $0\le p<b,$ the sequence $\left\{ F_{n}^{\left(b\right)}\right\} $
satisfies the recurrence
\[
F_{bn+p}^{\left(b\right)}=F_{n}^{\left(b\right)}F_{p}
\]
with initial conditions
\[
F_{p}^{\left(b\right)}=F_{p}.
\]
\end{thm}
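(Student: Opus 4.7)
The plan is to mimic the proof of the $b=3$ case, only keeping track of a general digit $p$ rather than the three cases $p=0,1,2$. Starting from the defining product
\[
F_{m}^{(b)} = \prod_{i=0}^{N_m - 1} F_{m_i},
\]
I would first analyze how the base-$b$ digits of $m = bn+p$ relate to those of $n$. Because $0 \le p < b$, the integer $p$ is a single base-$b$ digit, so writing $n = \sum_{i \ge 0} n_i b^i$ gives
\[
bn + p = p + \sum_{i \ge 0} n_i b^{i+1},
\]
which means the digit sequence of $bn+p$ is exactly the digit sequence of $n$ shifted up by one position, with $p$ inserted as the new least significant digit.

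Applying the product definition then separates the $i=0$ factor from the rest:
\[
F_{bn+p}^{(b)} = F_p \cdot \prod_{i \ge 1} F_{(bn+p)_i} = F_p \cdot \prod_{j \ge 0} F_{n_j} = F_p \cdot F_n^{(b)},
\]
which is the desired recurrence. For the initial conditions, when $0 \le p < b$ the base-$b$ expansion of $p$ is the single digit $p$, so the product defining $F_p^{(b)}$ collapses to the single term $F_p$.

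The argument is essentially bookkeeping, and the only place where care is needed is the shift in indices of the digits of $bn+p$ versus $n$; this is precisely the observation the authors already used in the $b=3$ proof, so there is no real obstacle — the previous proof goes through verbatim with $p$ playing the role of the three explicit digits $0,1,2$.
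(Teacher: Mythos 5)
Your proposal is correct and follows exactly the argument the paper intends: the paper omits the proof, stating it is identical to the $b=3$ case, and that case is proved precisely by your observation that the base-$b$ digits of $bn+p$ are those of $n$ with $p$ appended as the least significant digit, so the defining product factors as $F_p\cdot F_n^{(b)}$.
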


We also have the following theorem:
\begin{thm}
A generating function for the sequence $\left\{ F_{n}^{\left(b\right)}\right\}$ is
\[
{\cal F}_b\left(z\right)=\sum_{n \ge 0} F_{n}^{\left(b\right)}z^n = \prod_{k \ge 0} 
\left(\sum_{l=0}^{b-1} F_l z^{b^l} \right)
\]
\begin{proof}
Take $f(k,i)=F_k$ in Theorem \ref{genfunc}.
\end{proof}
\end{thm}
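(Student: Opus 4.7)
The plan is to obtain the identity as a one-line specialization of Corollary \ref{genthm} with the choice $f(k,i) = F_k$, which is independent of $i$. I would substitute this $f$ into
\[
\prod_{i \ge 0}\sum_{k=0}^{b-1} f(k,i)\,z^{b^i k} = \sum_{k \ge 0} z^k \prod_{i \ge 0} f(k_i,i)
\]
and then simplify both sides separately. On the right-hand side, since $F_0 = 1$ and each integer $k$ has only finitely many nonzero base-$b$ digits, the infinite product $\prod_{i \ge 0} F_{k_i}$ collapses to the finite product $\prod_{i=0}^{N_k - 1} F_{k_i}$, which by the defining equation \eqref{4.0} equals $F_k^{(b)}$. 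Hence the right-hand side is exactly $\sum_{k \ge 0} F_k^{(b)} z^k = \mathcal{F}_b(z)$.

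On the left-hand side, the same substitution produces
\[
\prod_{i \ge 0}\Bigl(\sum_{l=0}^{b-1} F_l\, z^{b^i l}\Bigr),
\]
which, after relabeling the outer product index, is the product appearing in the statement. Equating the two simplified expressions yields the claimed generating-function identity.

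The only technical point worth verifying is that the infinite product and infinite sum make sense. As a formal power series in $z$, each factor on the left has constant term $F_0 = 1$, so the coefficient of any $z^n$ in the product is determined by finitely many factors and the product is well-defined term by term. Analytically, the $i$th factor equals $1 + O(z^{b^i})$ near the origin, so the tail contributions to $\log \mathcal{F}_b(z)$ decay doubly-exponentially in $i$, which gives absolute convergence in a neighborhood of $z = 0$. I do not expect any genuine obstacle beyond this routine convergence check; the theorem is essentially a repackaging of the digitwise generating-function principle already established in Corollary \ref{genthm}.
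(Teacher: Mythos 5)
Your proof is correct and is exactly the paper's argument: the paper's entire proof is ``Take $f(k,i)=F_k$ in Corollary \ref{genthm},'' and you have simply written out the substitution and the routine convergence check. Incidentally, the exponent $z^{b^i l}$ you obtain in each factor is the correct one; the exponent $z^{b^l}$ printed in the theorem statement appears to be a typo.
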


Additional identities similar to those satisfied by the usual Fibonacci numbers are derived from 
the identities \eqref{F3n}, \eqref{F3n+1} and \eqref{F3n+2} as follows.
\begin{thm}
The Fibonacci numbers $F_{n}^{\left(b\right)}$ satisfy
\[
\sum_{k=0}^{b-1}F_{bn+k}^{\left(b\right)}=2F_{bn+\left(b-1\right)}^{\left(b\right)}+F_{bn+\left(b-2\right)}^{\left(b\right)}-F_{n}^{\left(b\right)}
\]
and, for $0\le p\le q<b-3,$
\[
\sum_{k=p}^{q}F_{bn+k}^{\left(b\right)}=F_{bn+q+2}^{\left(b\right)}-F_{bn+p+1}^{\left(b\right)}.
\]
\end{thm}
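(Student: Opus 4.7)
The plan is to use the digit-factorization identity from the previous theorem, $F_{bn+p}^{(b)}=F_{n}^{(b)}F_{p}$ valid for $0\le p<b$, to reduce both claims to purely classical identities on the ordinary Fibonacci numbers $F_0=F_1=1,\,F_{k+1}=F_k+F_{k-1}$.

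First I would pull the common factor $F_n^{(b)}$ out of every summand. For the second identity (with $0\le p\le q<b-3$, so that all indices $k$ and in particular $q+2$ remain $<b$), this gives
\[
\sum_{k=p}^{q}F_{bn+k}^{(b)}=F_n^{(b)}\sum_{k=p}^{q}F_k,
\]
so the claim is equivalent to the classical telescoping identity $\sum_{k=p}^{q}F_k=F_{q+2}-F_{p+1}$. I would prove that in turn from the standard summation formula $\sum_{k=0}^{m}F_k=F_{m+2}-1$ (easily verified by induction using $F_{m+2}=F_{m+1}+F_m$ and the initial conditions $F_0=F_1=1$), by subtracting the $p=0,\dots,p-1$ terms from the $p=0,\dots,q$ terms. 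Rewriting $F_{q+2}-F_{p+1}=F_n^{(b)}(F_{q+2}-F_{p+1})=F_{bn+q+2}^{(b)}-F_{bn+p+1}^{(b)}$ using the digit-factorization (permissible because $q+2<b-1<b$ and $p+1<b$) recovers the stated form.

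For the first identity the index $k$ runs all the way to $b-1$, so after factoring,
\[
\sum_{k=0}^{b-1}F_{bn+k}^{(b)}=F_n^{(b)}\sum_{k=0}^{b-1}F_k=F_n^{(b)}(F_{b+1}-1).
\]
I would then use the three-term Fibonacci relation $F_{b+1}=F_b+F_{b-1}=(F_{b-1}+F_{b-2})+F_{b-1}=2F_{b-1}+F_{b-2}$ to rewrite this as
\[
F_n^{(b)}\bigl(2F_{b-1}+F_{b-2}-1\bigr)=2F_{bn+(b-1)}^{(b)}+F_{bn+(b-2)}^{(b)}-F_n^{(b)},
\]
again applying $F_{bn+p}^{(b)}=F_n^{(b)}F_p$ for the valid digits $p=b-1$ and $p=b-2$, as claimed.

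There is essentially no obstacle here; the one point that requires care is the range restriction $q<b-3$ in the second identity. This restriction exists precisely because the step $F_{q+2}=F_n^{(b)}F_{q+2}/F_n^{(b)}\leftrightarrow F_{bn+q+2}^{(b)}$ is only legitimate when $q+2$ is a valid base-$b$ digit. Outside this range one would pick up a carry when incrementing, causing the multiplicative factorization to fail, and this explains why the first identity must be stated separately with its own correction terms $2F_{bn+(b-1)}^{(b)}+F_{bn+(b-2)}^{(b)}-F_n^{(b)}$ instead of a clean telescoping expression.
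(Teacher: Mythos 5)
Your proposal is correct and follows essentially the same route as the paper: factor out $F_{n}^{(b)}$ via $F_{bn+p}^{(b)}=F_{p}F_{n}^{(b)}$, apply the classical summation $\sum_{k=0}^{m}F_{k}=F_{m+2}-1$ (telescoped for the second identity), and rewrite $F_{b+1}-1$ as $2F_{b-1}+F_{b-2}-1$ for the first. Your added remark on why the restriction $q<b-3$ is needed is a useful observation but does not change the argument.
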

\begin{proof}
We use the identity
\[
\sum_{k=0}^{n}F_{k}=F_{n+2}-1
\]
and, for $0\le p\le b-1,$
\[
F_{bn+p}^{\left(b\right)}=F_{p}F_{n}^{\left(b\right)}
\]
to deduce
\begin{align*}
\sum_{k=0}^{b-1}F_{bn+k}^{\left(b\right)} & =\sum_{k=0}^{b-1}F_{k}F_{n}^{\left(b\right)}=F_{n}^{\left(b\right)}\left(F_{b+1}-1\right)\\
 & =F_{n}^{\left(b\right)}\left(F_{b}+F_{b-1}-1\right)=F_{n}^{\left(b\right)}\left(F_{b-2}+2F_{b-1}-1\right)\\
 & =F_{bn+b-2}^{\left(b\right)}+2F_{bn+b-1}^{\left(b\right)}-F_{n}^{\left(b\right)}.
\end{align*}

Moreover, for $0\le p\le q<b-3,$
\begin{align*}
\sum_{k=p}^{q}F_{bn+k}^{\left(b\right)} & =\sum_{k=0}^{q}F_{bn+k}^{\left(b\right)}-\sum_{k=0}^{p-1}F_{bn+k}^{\left(b\right)}\\
 & =F_{n}^{\left(b\right)}\sum_{k=0}^{q}F_{k}-F_{n}^{\left(b\right)}\sum_{k=0}^{p-1}F_{k}\\
 & =F_{n}^{\left(b\right)}\left(F_{q+2}-1\right)-F_{n}^{\left(b\right)}\left(F_{p+1}-1\right)\\
 & =F_{bn+q+2}^{\left(b\right)}-F_{bn+p+1}^{\left(b\right)}.
\end{align*}
\end{proof}

\begin{prop}
As a consequence of Cassini's identity
\[
F_{q}^{2}-F_{q+1}F_{q-1}=\left(-1\right)^{q},
\]
we deduce
\[
\left(F_{bn+q}^{\left(b\right)}\right)^{2}-F_{bn+q+1}^{\left(b\right)}F_{bn+q-1}^{\left(b\right)}=\left(-1\right)^{q}\left(F_{n}^{\left(b\right)}\right)^{2}
\]
and from its more general version
\[
F_{q}^{2}-F_{q+r}F_{q-r}=\left(-1\right)^{q-r+1}F_{r-1}^{2},
\]
we have
\[
\left(F_{bn+q}^{\left(b\right)}\right)^{2}-F_{bn+q+r}^{\left(b\right)}F_{bn+q-r}^{\left(b\right)}=\left(-1\right)^{n-r+1}\left(F_{bn+r-1}^{\left(b\right)}\right)^{2}.
\]

\end{prop}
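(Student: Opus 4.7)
The plan is to reduce both Cassini-type identities to their classical counterparts by exploiting the multiplicative factorization established in the preceding theorem, namely
\[
F_{bn+p}^{(b)}=F_{p}\,F_{n}^{(b)}\qquad(0\le p<b).
\]
This factorization turns any product of base-$b$ Fibonacci numbers whose indices all lie inside a single base-$b$ block $\{bn,bn+1,\ldots,bn+b-1\}$ into a product of ordinary Fibonacci numbers times $\bigl(F_{n}^{(b)}\bigr)^{2}$, which is precisely the shape of the right-hand sides in the claim.

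For the first identity I would assume $1\le q\le b-2$, so that $q-1$, $q$, $q+1$ all lie in $[0,b-1]$ and the factorization applies to each factor on the left-hand side. Pulling out $\bigl(F_{n}^{(b)}\bigr)^{2}$ yields
\[
\left(F_{bn+q}^{(b)}\right)^{2}-F_{bn+q+1}^{(b)}F_{bn+q-1}^{(b)}=\left(F_{q}^{2}-F_{q+1}F_{q-1}\right)\left(F_{n}^{(b)}\right)^{2},
\]
and the bracket collapses to $(-1)^{q}$ by classical Cassini.

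For the general version I would impose $r\ge 1$ and $r\le q\le b-1-r$, which keeps $q-r$, $q$, $q+r$, and $r-1$ all inside $[0,b-1]$. The same termwise factorization produces
\[
\left(F_{bn+q}^{(b)}\right)^{2}-F_{bn+q+r}^{(b)}F_{bn+q-r}^{(b)}=\left(F_{q}^{2}-F_{q+r}F_{q-r}\right)\left(F_{n}^{(b)}\right)^{2},
\]
and the generalized Cassini identity reduces the bracket to $(-1)^{q-r+1}F_{r-1}^{2}$. One last application of the factorization recognises $F_{r-1}F_{n}^{(b)}=F_{bn+r-1}^{(b)}$, rewriting the right-hand side in the form stated.

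There is no real algebraic obstacle; the only thing to watch is the bookkeeping. Since $F_{bn+p}^{(b)}=F_{p}F_{n}^{(b)}$ holds only while $p$ stays inside one base-$b$ block, the ranges on $q$ and $r$ must be chosen so that none of the indices $q\pm 1$, $q\pm r$, or $r-1$ spills into the next block; beyond that restriction the derivations are purely mechanical reductions of the base-$b$ identity to the classical Cassini identities, inherited verbatim once the common factor $\bigl(F_{n}^{(b)}\bigr)^{2}$ is extracted.
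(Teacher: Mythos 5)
Your proposal is correct and follows exactly the route the paper intends: the paper gives no explicit proof, presenting the Proposition as an immediate consequence of the factorization $F_{bn+p}^{(b)}=F_{p}F_{n}^{(b)}$ from the preceding theorem, which is precisely what you use (together with the range restrictions on $q$ and $r$ that the paper leaves implicit but that are indeed needed to keep all indices in one base-$b$ block). One point you should have flagged rather than glossed over: your correct computation for the general version yields the sign $(-1)^{q-r+1}$, whereas the Proposition as printed has $(-1)^{n-r+1}$; this is evidently a typo in the paper, so your derivation does not reach \emph{the form stated} but rather shows that the stated exponent should read $q-r+1$.
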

\subsection{Another definition of base-$2$ Fibonacci numbers}
The extension of Fibonacci numbers defined in \eqref{4.1} in the case $b=2$ gives the uninteresting sequence
\[
F_n^{\left(2\right)} =1,\,\, \forall n\ge0,
\]
so that we propose the study of a slightly modified version of it.
Assume we define now the modified base-$2$ Fibonacci numbers as follows:
\[
\tilde{F}^{\left(2\right)}_{n} = \sum_{k=0}^{n} \binom{n-k}{k}_b
\]
so that we do not impose the digitally dominance on the summation index.

The first values of this new sequence, starting from $n=0,$ are
\[
1,1,2,1,3,2,3,1,4,3,5,\dots
\]
We recognize the first entries of Stern's diatomic sequence
$A002487$ defined by
\[
a_{0}=0,\thinspace\thinspace a_{1}=1\thinspace\thinspace\text{and}\thinspace\thinspace
a_{2n}=a_{n},\thinspace\thinspace\thinspace a_{2n+1}=a_{n}+a_{n+1},\thinspace\thinspace n\ge 1.
\]

\begin{thm}
The modified base-$2$ Fibonacci numbers $\tilde{F}_{n}$ are
\[
\tilde{F}_{n}^{\left(2\right)}=a_{n+1}
\]
where $\left\{ a_{n}\right\} $ is Stern's diatomic sequence.
\end{thm}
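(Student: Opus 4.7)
The plan is to show that $\tilde{F}_n^{(2)}$ obeys the same recurrence as Stern's diatomic sequence, shifted by one. If we set $c_n := a_{n+1}$, the Stern relations $a_{2n}=a_n$ and $a_{2n+1}=a_n+a_{n+1}$ translate to
\[
c_{2m+1} = a_{2m+2} = a_{m+1} = c_m, \qquad c_{2m} = a_{2m+1} = a_m + a_{m+1} = c_{m-1} + c_m.
\]
So it suffices to establish, for $m\ge 1$, the two identities
\[
\tilde{F}_{2m+1}^{(2)} = \tilde{F}_m^{(2)}, \qquad \tilde{F}_{2m}^{(2)} = \tilde{F}_m^{(2)} + \tilde{F}_{m-1}^{(2)},
\]
together with the base values $\tilde{F}_0^{(2)} = \tilde{F}_1^{(2)} = 1 = a_1 = a_2$, which are immediate from the definition. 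A straightforward induction on $n$, together with the convention $\tilde{F}_{-1}^{(2)}=0=a_0$, then forces $\tilde{F}_n^{(2)} = a_{n+1}$.

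To prove the two recurrences, I would split the sum $\tilde{F}_n^{(2)}=\sum_{k=0}^n\binom{n-k}{k}_2$ according to the parity of $k$ and peel off the lowest binary digit of both $n-k$ and $k$, exploiting the digit-product factorization
\[
\binom{n-k}{k}_2 = \binom{(n-k)_0}{k_0}\binom{\lfloor(n-k)/2\rfloor}{\lfloor k/2\rfloor}_2.
\]
For $n=2m+1$: writing $k=2j$ gives lowest-digit factor $\binom{1}{0}=1$ and residual $\binom{m-j}{j}_2$, while $k=2j+1$ gives $\binom{0}{1}=0$ and so contributes nothing. The surviving terms collapse to $\sum_{j=0}^{m}\binom{m-j}{j}_2=\tilde{F}_m^{(2)}$. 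For $n=2m$: both parities survive, with $k=2j$ contributing $\binom{0}{0}\binom{m-j}{j}_2=\binom{m-j}{j}_2$ and $k=2j+1$ contributing $\binom{1}{1}\binom{m-j-1}{j}_2=\binom{m-j-1}{j}_2$; the total is $\sum_{j=0}^{m}\binom{m-j}{j}_2+\sum_{j=0}^{m-1}\binom{m-1-j}{j}_2=\tilde{F}_m^{(2)}+\tilde{F}_{m-1}^{(2)}$.

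The main obstacle is entirely bookkeeping: one has to verify that the re-indexed sums really do range over exactly the indices appearing in the defining formula for $\tilde{F}_m^{(2)}$ and $\tilde{F}_{m-1}^{(2)}$. This is harmless because any term with $j$ so large that $j>m-j$ digit-wise in base $2$ automatically vanishes ($\binom{m-j}{j}_2=0$), so extending or truncating the upper summation limit does not change the value. No deeper identity is required beyond the parity split and the digit-wise factorization of $\binom{\cdot}{\cdot}_2$.
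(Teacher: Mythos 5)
Your proof is correct, and it takes a genuinely more self-contained route than the paper. The paper first observes (via Lucas' theorem) that $\binom{n-k}{k}_2$ is $0$ or $1$ according to the parity of $\binom{n-k}{k}$, so that $\tilde{F}_n^{(2)}$ counts the odd entries on the $n$-th shallow diagonal of Pascal's triangle, and then simply cites Carlitz for the fact that this count satisfies the Stern-type recurrence; no computation is carried out. You instead prove the recurrence from scratch by splitting on the parity of $k$ and peeling off the lowest binary digit through the factorization $\binom{n-k}{k}_2=\binom{(n-k)_0}{k_0}\binom{\lfloor(n-k)/2\rfloor}{\lfloor k/2\rfloor}_2$, which uses nothing beyond the definition of the base-$2$ binomial coefficient. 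Both arguments share the same overall skeleton (match the shifted Stern recurrence and the initial values $\tilde{F}_0^{(2)}=\tilde{F}_1^{(2)}=1$), but yours eliminates the external reference, and your index bookkeeping checks out: in the odd case $n=2m+1$ the odd-$k$ terms die on the digit factor $\binom{0}{1}$, and in the even case the two parity classes reproduce exactly $\sum_{j=0}^{m}\binom{m-j}{j}_2$ and $\sum_{j=0}^{m-1}\binom{m-1-j}{j}_2$. One point worth flagging: your even-index recurrence $\tilde{F}_{2m}^{(2)}=\tilde{F}_m^{(2)}+\tilde{F}_{m-1}^{(2)}$ differs from the relation $\theta_0(2n)=\theta_0(n)+\theta_0(n+1)$ as quoted in the paper, but yours is the one consistent with the listed values $1,1,2,1,3,2,3,1,4,3,5,\dots$ (the quoted form fails already at $n=1$, giving $2=1+2$), and it is yours that correctly translates to Stern's $a_{2m+1}=a_m+a_{m+1}$ under the shift $c_n=a_{n+1}$; the paper's displayed recurrence appears to contain an index typo.
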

\begin{proof}
Since the base-$2$ binomial coefficient $\binom{n}{k}_{2}$ equals
$0$ or $1$ according to the parity of $\binom{n}{k},$ the modified base-$2$ Fibonacci
number
\[
\tilde{F}_{n}^{\left(2\right)}=\sum_{k\ge0}\binom{n-k}{k}_{2}
\]
is the number of odd binomial coefficients $\binom{n-k}{k}.$ 
It was identified by Carlitz who showed in \cite{Carlitz} that
this number $\theta_{0}\left(n\right)$ (in Carlitz notation) satisfies
\[
\theta_{0}\left(2n+1\right)=\theta_{0}\left(n\right),\thinspace\thinspace\thinspace\theta_{0}\left(2n\right)=\theta_{0}\left(n\right)+\theta_{0}\left(n+1\right)
\]
with $\theta_{0}\left(0\right)=1$ and $\theta_{0}\left(1\right)=1,$
hence $\theta_{0}\left(n\right)=a_{n+1}.$
\end{proof}

Another proof is obtained remarking that the modified base-$2$ binomial coefficients coincide with the usual binomial coefficients $\mod 2,$ and using the formula by S. Northshield \cite[Thm4.1]{Northshield} 
\[
\sum_{2i+j=n} \left(\binom{i+j}{i}\mod 2\right)  = a_{n+1}.
\]
Carlitz gives in \cite{Carlitz} a combinatorial interpretation of $a_{n+1}=\tilde{F}_{n}^{\left(2\right)}$ as the number
of hyperbinary representations of $n$, i.e the number of ways of writing $n$  as a sum of powers of $2$, each power being used at most twice.\\

\section{The base-b exponential}
\label{Exponential}
Analogously to the classical exponential function, we are interested in studying a modified exponential function
\begin{equation}\label{5.1}
e_b(x,w) := \sum_{k=0}^{\infty}\frac{x^{s_b(k)}}{(k!)_b}w^k.
\end{equation}
We note that without the weighting from $w^k$ this series has a zero radius of convergence. We can relate this to the classical exponential function, but require the following formula about the upper incomplete gamma function $\Gamma(a,z)$ \cite[(8.2.2)]{NIST:DLMF}, where
\begin{equation}\label{5.2}
\Gamma(a,z):=\int_{z}^{\infty} t^{a-1}e^{-t}dt.
\end{equation}
From \cite[(8.4.8)]{NIST:DLMF}, we have 
\begin{equation}\label{5.3}
\sum_{k=0}^{n}\frac{z^k}{k!} = e^z \frac{\Gamma(n+1,z)}{n!}.
\end{equation}

\begin{thm}
The base-$b$ exponential function satisfies
\begin{equation}\label{5.5}
e_b(x,w) = \exp\left(x\sum_{i=0}^{\infty}w^{b^i}\right) \prod_{i=0}^{\infty} \left(\frac{\Gamma(b,xw^{b^i})}{(b-1)!}\right) \simeq \exp\left(x\sum_{i=0}^{\infty} w^{b^i}\right) \simeq e^{xw+xw^{b}}
\end{equation}
for $w$ close to 0.
\end{thm}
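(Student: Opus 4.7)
The plan is to apply Corollary \ref{genthm} with the right choice of digitwise function, then recognize the inner factors as truncated exponential series, and finally extract the stated asymptotics as $w\to 0$.

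First I would set $f(k,i)=\frac{x^{k}}{k!}$ (independent of $i$) in Corollary \ref{genthm}. Under this choice, $\prod_{i=0}^{\infty}f(k_i,i)=\frac{x^{\sum_i k_i}}{\prod_i k_i!}=\frac{x^{s_b(k)}}{(k!)_b}$, so the right-hand side of the corollary reproduces the series \eqref{5.1} exactly. The left-hand side becomes
\[
\prod_{i=0}^{\infty}\sum_{k=0}^{b-1}\frac{x^{k}}{k!}\,w^{b^{i}k}=\prod_{i=0}^{\infty}\sum_{k=0}^{b-1}\frac{(xw^{b^{i}})^{k}}{k!},
\]
so that $e_b(x,w)=\prod_{i\ge 0}\sum_{k=0}^{b-1}\frac{(xw^{b^{i}})^{k}}{k!}$.

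Next I would apply identity \eqref{5.3} to each factor, with $n=b-1$ and $z=xw^{b^{i}}$. This gives
\[
\sum_{k=0}^{b-1}\frac{(xw^{b^{i}})^{k}}{k!}=e^{xw^{b^{i}}}\frac{\Gamma(b,xw^{b^{i}})}{(b-1)!}.
\]
Taking the infinite product over $i$ and pulling the exponentials out of the product (using $\prod_i e^{xw^{b^i}}=\exp(x\sum_i w^{b^i})$) yields the stated closed form, which is the first equality in \eqref{5.5}.

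For the two asymptotic equivalences as $w\to 0$, I would argue as follows. Since $\Gamma(b,z)\to\Gamma(b)=(b-1)!$ as $z\to 0$, and since for small $w$ every $xw^{b^{i}}$ is small, each factor $\Gamma(b,xw^{b^{i}})/(b-1)!$ tends to $1$; a standard estimate using $\Gamma(b,z)/(b-1)!=1+O(z^{b})$ makes the infinite product converge to $1$ as $w\to 0$, giving the first $\simeq$. The second $\simeq$ follows because the series $\sum_{i\ge 0}w^{b^{i}}=w+w^{b}+w^{b^{2}}+\cdots$ is dominated for small $w$ by its first two terms, the subsequent terms being $O(w^{b^{2}})$. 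The only step that requires genuine care is the justification of the interchange of product and sum embodied in Corollary \ref{genthm} at the level of formal power series, together with checking that the $\simeq$ symbols can be interpreted as asymptotic equivalence in the limit $w\to 0$; once one works formally in $\mathbb{C}[[w]]$ (where only finitely many $i$ contribute to any fixed power of $w$), both tasks are routine.
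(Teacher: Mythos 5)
Your proposal is correct and follows essentially the same route as the paper: apply Corollary \ref{genthm} with $f(k,i)=x^{k}/k!$ and generating variable $w$, rewrite each truncated exponential factor via \eqref{5.3}, pull the exponentials out of the product, and then argue that the infinite product of normalized incomplete gamma factors is close to $1$ (the paper does this via the expansion \eqref{5.11} and a bound on the sum of logarithms, which is the quantitative version of your $1+O(z^{b})$ estimate). No substantive differences to report.
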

\begin{proof}

Making the substitution $x \rightarrow w$ in Theorem \ref{genthm}, then taking $f(k_i,i) = \frac{x^{k_i}}{k_i!}$ while using \eqref{5.3} to simplify the partial sums as below completes the proof:
\begin{equation}\label{5.7}
\prod_{i=0}^{\infty}\sum_{k=0}^{b-1} f(k,i) = \prod_{i=0}^{\infty}\sum_{k=0}^{b-1} \frac{(xw^{b^i})^{k}} {k!} = \prod_{i=0}^{\infty} \exp\left(xw^{b^i}\right) \frac{\Gamma(b,xw^{b^i})}{(b-1)!} = \exp\left(\sum_{i=0}^{\infty} xw^{b^i}\right) \prod_{i=0}^{\infty} \left(\frac{\Gamma(b,xw^{b^i})}{(b-1)!}\right).
\end{equation}
Equating these representations proves the first part of the theorem. The behavior of our base-$b$ exponential then depends on the series $\sum_{i=0}^{\infty} w^{b^i}$ and an infinite gamma product. 
We now prove the first approximation. Looking at the infinite product term on the right-hand side shows that, even for small values of $b$ and $|x|<1$ it can be approximated by the indicator function of the interval $\left[0,1\right]$
\begin{equation}\label{5.9}
f\left(w\right)=\begin{cases}
1, & 0\le w \le 1\\
0, & w>1
\end{cases}
\end{equation}
This can be explained as follows: the function
\begin{equation}\label{5.10}
w\mapsto\frac{\Gamma\left(b,xw^{b^{i}}\right)}{\Gamma\left(b\right)}
\end{equation}
is strictly decreasing over $\left[0,1\right]$ from 1 at $w=0,$
to $0<\frac{\Gamma\left(b,x\right)}{\Gamma\left(b\right)}<1$ at $w=1.$ Moreover,
as \textbf{$b$ }increases, the ratio $\frac{\Gamma\left(b,x\right)}{\Gamma\left(b\right)}$
increases to 1.
For $0<w<1,$ $w^{b^{i}}$ is close to $0$ for $i$ large, so that
using the asymptotic expansion \cite[(8.7.3)]{NIST:DLMF}
\begin{equation}\label{5.11}
\Gamma\left(b,z\right)=\Gamma\left(b\right)+z^{b}\left(-\frac{1}{b}+\frac{z}{1+b}+\dots\right)
\end{equation}
we obtain
\begin{equation}\label{5.12}
\frac{\Gamma\left(b,xw^{b^{i}}\right)}{\Gamma\left(b\right)}=1-\frac{x^bw^{b^{i+1}}}{\Gamma\left(b+1\right)}+\frac{x^{b+1}w^{b^{i+1}+b^{i}}}{\left(b+1\right)\Gamma\left(b\right)}+\dots
\end{equation}
As a consequence, for $0\le w<1,$ $\sum w^{b^{i+1}}$ is convergent
so that $\prod_{i}\left(1-\frac{xw^{b^{i+1}}}{\Gamma\left(b+1\right)}\right)$
and $\prod_{i}\frac{\Gamma\left(b,xw^{b^{i}}\right)}{\Gamma\left(b\right)}$
are convergent. Moreover, from
\begin{equation}\label{5.13}
\log\Gamma\left(b,xw^{b^{i}}\right)-\log\Gamma\left(b\right)\simeq-\frac{1}{b}x^bw^{b^{i+1}},
\end{equation}
we deduce, for $0\le x<1,$
\begin{equation}\label{5.14}
\left|\sum\log\Gamma\left(b,xw^{b^{i}}\right)-\log\Gamma\left(b\right)\right|=\frac{1}{b}\sum_{i\ge0}x^bw^{b^{i+1}}\le\frac{1}{b}\sum_{i\ge0}w^{b^{i+1}}\le\frac{1}{b}\frac{w^{b}}{1-w}
\end{equation}
which goes to $0$ as $b\to\infty,$ so that the left-hand side goes
to $0$ as well. This yields the first approximation for $x\in\left[0,1\right]$, which can in turn be approximated as
\begin{equation}
e_b(x,w)\simeq e^{xw+xw^{b}}
\end{equation}
since for $w\in\left[0,1\right)$,
\begin{equation}
e^{w^{b^{2}}}\ll e^{w^{b}}.
\end{equation}
\end{proof}

For $b=2$, we obtain the following result.
\begin{cor}
The base $2$ exponential function satisfies
\begin{equation}\label{5.8}
e_2(x,w) = \prod_{i=0}^{\infty}\left(1+xw^{2^i}\right).
\end{equation}
\end{cor}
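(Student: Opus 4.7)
The plan is to specialize the main theorem of the section (equation \eqref{5.5}) to the case $b=2$, where the incomplete gamma factor telescopes against the exponential prefactor.

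First I would record the closed form $\Gamma(2,z) = (1+z)e^{-z}$, which follows from one integration by parts in the definition \eqref{5.2}. Plugging this into \eqref{5.5} with $b=2$, using $(b-1)! = 1$, gives
\[
e_2(x,w) = \exp\!\left(x\sum_{i=0}^{\infty}w^{2^i}\right)\prod_{i=0}^{\infty}\bigl(1+xw^{2^i}\bigr)\exp\!\left(-xw^{2^i}\right).
\]
Since the product $\prod_i e^{-xw^{2^i}}$ equals $\exp\!\bigl(-x\sum_i w^{2^i}\bigr)$, this cancels the prefactor and leaves precisely $\prod_{i\ge 0}(1+xw^{2^i})$.

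As a sanity check, and to give an independent derivation, I would also verify the identity directly from Corollary \ref{genthm}: in base $2$ every digit $k_i \in \{0,1\}$, so $(k!)_2 = \prod_i k_i! = 1$ and $x^{s_2(k)} = \prod_i x^{k_i}$. Hence $e_2(x,w) = \sum_{k\ge 0} x^{s_2(k)} w^k$, and applying Corollary \ref{genthm} with $f(k,i) = x^k$ gives
\[
\prod_{i=0}^{\infty}\sum_{k=0}^{1} x^k w^{2^i k} = \prod_{i=0}^{\infty}(1+xw^{2^i}) = \sum_{k=0}^{\infty} w^k \prod_{i=0}^{\infty} x^{k_i} = e_2(x,w).
\]

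There is no real obstacle here, since the proof is a direct specialization; the only thing to watch is the convergence of the infinite product, which is automatic for $|w|<1$ because $\sum_i w^{2^i}$ converges absolutely and hence $\sum_i \log(1+xw^{2^i})$ does too. I would state the result as an immediate corollary with the short telescoping computation above as its proof.
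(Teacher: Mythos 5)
Your proof is correct and follows essentially the same route as the paper: the corollary is stated there without an explicit proof, but it is exactly the $b=2$ specialization you carry out, and your second derivation via Corollary \ref{genthm} reproduces the middle expression $\prod_i\sum_{k=0}^{b-1}(xw^{b^i})^k/k!$ from the proof of the theorem, which for $b=2$ is $\prod_i(1+xw^{2^i})$ on the nose. The telescoping of $\Gamma(2,z)=(1+z)e^{-z}$ against the exponential prefactor is a clean way to see the same cancellation from the stated form of \eqref{5.5}.
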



Analogously to the classical identity $e^xe^y=e^{x+y}$, we have a similar convolution identity for the base-$b$ exponential.
\begin{thm}
\begin{equation}
e_b(x,w)\star e_b(y,w) = e_b(x+y,w),
\end{equation}
where 
\begin{equation}
\sum_{k=0}^{\infty}a_kx^{s_b(k)}\star \sum_{l=0}^{\infty}b_ly^{s_b(l)} = \sum_{n=0}^{\infty}\sum_{k\leq_b n}a_k b_{n-k}x^{s_b(k)}y^{s_b(n-k)}.
\end{equation}
\begin{proof}
The theorem follows from taking $a_k=\frac{w^k}{(k!)_b}$ and $b_l=\frac{w^l}{(l!)_b}$ and applying the digital binomial theorem \eqref{2.4}. 
\end{proof}
\end{thm}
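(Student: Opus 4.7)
The plan is to follow the hint exactly: substitute $a_k=\frac{w^k}{(k!)_b}$ and $b_l=\frac{w^l}{(l!)_b}$ into the definition of the star product, then repackage the inner sum so that the base-$b$ binomial theorem \eqref{2.4} applies. The only combinatorial input needed beyond that is the factorization $\frac{1}{(k!)_b\,((n-k)!)_b}=\frac{\binom{n}{k}_b}{(n!)_b}$, which holds precisely when $k\leq_b n$; this is immediate from the digitwise definitions, since carry-free subtraction gives $(n-k)_i=n_i-k_i$ and therefore
\[
\binom{n}{k}_b=\prod_{i=0}^{N-1}\binom{n_i}{k_i}=\prod_{i=0}^{N-1}\frac{n_i!}{k_i!(n_i-k_i)!}=\frac{(n!)_b}{(k!)_b\,((n-k)!)_b}.
\]

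First I would write out the left-hand side by plugging in the given $a_k,b_l$:
\[
e_b(x,w)\star e_b(y,w)=\sum_{n=0}^{\infty}\sum_{k\leq_b n}\frac{w^{k}}{(k!)_b}\frac{w^{n-k}}{((n-k)!)_b}\,x^{s_b(k)}y^{s_b(n-k)}=\sum_{n=0}^{\infty}w^{n}\sum_{k\leq_b n}\frac{x^{s_b(k)}y^{s_b(n-k)}}{(k!)_b\,((n-k)!)_b}.
\]
Next I would pull out $(n!)_b$ using the factorial identity above, turning each inner sum into $\frac{1}{(n!)_b}\sum_{k\leq_b n}\binom{n}{k}_b x^{s_b(k)}y^{s_b(n-k)}$. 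Since $\binom{n}{k}_b$ already vanishes whenever $k\not\leq_b n$, the constraint $k\leq_b n$ can be freely extended to $0\le k\le n$, which is exactly the form needed to invoke the base-$b$ binomial theorem \eqref{2.4} and collapse the inner sum to $(x+y)^{s_b(n)}$.

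Combining everything, I expect to arrive at
\[
e_b(x,w)\star e_b(y,w)=\sum_{n=0}^{\infty}\frac{(x+y)^{s_b(n)}}{(n!)_b}\,w^{n},
\]
which matches \eqref{5.1} with $x$ replaced by $x+y$, yielding $e_b(x+y,w)$. There is no serious obstacle here: the only nonroutine step is justifying the factorial identity, and that is a one-line consequence of the digitwise definitions together with the carry-free condition implicit in $k\leq_b n$. The rest is bookkeeping and a direct appeal to \eqref{2.4}.
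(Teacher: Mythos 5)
Your proposal is correct and follows essentially the same route as the paper, whose proof is just the one-line instruction you expanded; the key filling-in step, namely that $\frac{1}{(k!)_b\,((n-k)!)_b}=\frac{\binom{n}{k}_b}{(n!)_b}$ holds under the carry-free condition $k\leq_b n$ (and only then, which is exactly why $\star$ imposes that condition, as the paper remarks at the end of the section), is the intended argument. The bookkeeping and the appeal to \eqref{2.4} are exactly as the authors envisage.
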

The $\star$ operation forms an analogue of multiplication for formal power series which instead applies to series with involving powers of $s_b(k)$, where the convolution of two power series is defined as
\begin{equation}
\sum_{k=0}^{\infty}a_kx^{k}\star \sum_{l=0}^{\infty}b_ly^{l} = \sum_{n=0}^{\infty}\sum_{k\leq n}a_k b_{n-k}x^{k}y^{n-k}.
\end{equation}
We note that the $\star$ operator must manually impose the condition $k\leq_bn$ because of the basic result that
\begin{equation}
\binom{n}{k}_b \neq \frac{(n!)_b}{(k!)_b(n-k)!_b},
\end{equation}
since the left hand side is zero for $k$ not digitally dominated by $n$, while the right hand side is well-defined and in general non-zero for such $k$.
\bibliographystyle{plain}

\end{document}